 \newtheorem{thm}{Theorem}[section]
 \newtheorem{cor}[thm]{Corollary}
 \newtheorem{lem}[thm]{Lemma}
 \theoremstyle{definition}
 \newtheorem{defn}[thm]{Definition}
 \numberwithin{equation}{section}
\numberwithin{equation}{section}
\newcommand{\cl}[1]{\mathcal{#1}} 
\newcommand{\bb}[1]{\mathbb{#1}}
\newcommand{\sca}[1]{\left\langle#1\right\rangle} 
\begin{document}

\title[A new approach to the similarity problem]
 {A new approach to the similarity problem}

\author{E. Papapetros}
\address{University of Patras\\Faculty of Sciences\\ Department of Mathematics\\265 00 Patras Greece}
\email{e.papapetros@upatras.gr}

\subjclass{Primary: 47L30; Secondary: 47C15, 46L05, 46L10}

\keywords{von Neumann algebras, similarity, hyperreflexivity}

\maketitle

\begin{abstract}
We say that a $C^*$-algebra $\cl A$ satisfies the similarity property ((SP)) if every bounded homomorphism $u\colon \cl A\to \cl B(H),$ where $H$ is a Hilbert space, is similar to a $*$-homomorphism. We introduce the following hypothesis {\bf (EP).}\\
  {\bf (EP): Every separably acting von Neumann algebra with a cyclic vector is hyperreflexive.}
\\We prove that under {\bf (EP)}, all $C^*$-algebras satisfy (SP).
\end{abstract}

\section{Introduction}
In 1955, Kadison in his article \cite{kad} introduced the following problem which still remains one of the most famous open problems in the theory of $C^*$-algebras: 

Let $H$ be a Hilbert space and $u\colon \cl A\to \cl B(H)$ be a bounded homomorphism on a $C^*$-algebra $\cl A.$ Does there exist an invertible operator $S\in\cl B(H)$ such that the map $$\pi(A)=S^{-1} u(A) S,\,\,A\in\cl A$$ defines a $*$-homomorphism?\\
We say 
that a $C^*$-algebra $\cl A$ satisfies the similarity property ((SP)) if every bounded homomorphism $u\colon \cl A\rightarrow \cl B(H),$ where $H$ is a Hilbert space, 
has the above property. In  \cite{haag}, Haagerup gave an affirmative answer for the case in which the map $u$ has a finite cyclic set, i.e. there exist $n\in\bb N$ and $\xi _1, ...,
\xi_n\in H$ such that $$H=\overline{[u(A)\xi _i\,\mid\,\,A\in\cl A,\,\,i=1,...,n]},$$ where, in general, for a subset $\cl S$ of some vector space $\cl X,$ $\,[\cl S]$ denotes the linear span of $\cl S.$  
In the same article, Haagerup proved that  every completely bounded homomorphism from a $C^*$-algebra $\cl A$ into $\cl B(H),$ where $H$ is a Hilbert space, is similar to a $*$-homomorphism. 

Kadison's similarity problem is equivalent to\\
 {\bf Arveson's hyperreflexivity problem : Every von Neumann algebra is hyperreflexive}.
 Arveson in \cite{arv} showed that all nest algebras are hyperreflexive with hyperreflexivity constant $1.$  Although many von Neumann algebras are hyperreflexive, the question of whether every von Neumann algebra is hyperreflexive remains open. 
 
We say that a von Neumann algebra $\cl M$ satisfies the weak similarity property ((WSP)) if  
every w*-continuous and unital bounded homomorphism $u\colon \cl M\to \cl B(H),$ where $H$ is a Hilbert space, is similar to a $*$-homomorphism. The connection between (SP) and (WSP) is given by the following lemma. Its proof can be found in \cite{ele-pap}.

\begin{lem}
\label{WSP}
Let $\cl A$ be a unital $C^*$-algebra. The algebra $\cl A$ satisfies (SP) if and only if the second dual algebra $\cl A^{**}$ satisfies (WSP).
\end{lem}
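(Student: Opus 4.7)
The plan is to prove each implication by passing between $\cl A$ and $\cl A^{**}$ via restriction in one direction and w*-continuous extension in the other.

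For the forward direction, assume $\cl A$ satisfies (SP) and let $u\colon\cl A^{**}\to\cl B(H)$ be a w*-continuous unital bounded homomorphism. Restricting $u$ to $\cl A$ and applying (SP) yields an invertible $S\in\cl B(H)$ and a $*$-homomorphism $\pi\colon\cl A\to\cl B(H)$ with $\pi(a)=S^{-1}u(a)S$ for every $a\in\cl A$. Every $*$-representation of a unital $C^*$-algebra extends uniquely to a normal $*$-homomorphism on its second dual, so $\pi$ lifts to $\tilde\pi\colon\cl A^{**}\to\cl B(H)$. The two maps $\tilde\pi(\cdot)$ and $S^{-1}u(\cdot)S$ are both w*-continuous on $\cl A^{**}$ (the latter because conjugation by $S^{\pm 1}$ is separately w*-continuous on $\cl B(H)$) and they coincide on the w*-dense subalgebra $\cl A$, hence they coincide everywhere, exhibiting $u$ as similar to the $*$-homomorphism $\tilde\pi$.

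For the converse, assume $\cl A^{**}$ satisfies (WSP) and let $u\colon\cl A\to\cl B(H)$ be a bounded homomorphism. I would first reduce to the unital case: $p:=u(1_{\cl A})$ is a bounded idempotent, so there is an invertible $T\in\cl B(H)$ with $TpT^{-1}$ equal to a selfadjoint projection $q$; after replacing $u$ by its similarity-conjugate and corestricting to $\cl B(qH)$, I obtain a unital bounded homomorphism $v\colon\cl A\to\cl B(qH)$, and any similarity of $v$ to a $*$-homomorphism transports (by extending by zero on $(1-q)H$) back to a similarity of the original $u$. Next I would produce the unique w*-continuous extension $\tilde v\colon\cl A^{**}\to\cl B(qH)$ by a second-adjoint construction against the predual of $\cl B(qH)$. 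The crucial point is that $\tilde v$ is multiplicative: this follows from Arens regularity of $\cl A$ (so the Arens product on $\cl A^{**}$ agrees with its von Neumann algebra product and is separately w*-continuous) together with multiplicativity of $v$ on the w*-dense subalgebra $\cl A$, propagated in two successive density steps. Since $\tilde v$ is a w*-continuous unital bounded homomorphism, (WSP) produces a $*$-homomorphism $\tilde\pi\colon\cl A^{**}\to\cl B(qH)$ similar to $\tilde v$, and restricting $\tilde\pi$ to $\cl A$ yields the required $*$-homomorphism similar to $v$, and hence to $u$.

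The main obstacle is the converse direction, and within it the verification that the second-adjoint extension is actually a homomorphism: boundedness and w*-continuity are automatic, but multiplicativity requires the Arens regularity of $C^*$-algebras. The remaining steps — the similarity-reduction of $u(1_{\cl A})$ to a projection and the transport of similarities along $\cl A\hookrightarrow\cl A^{**}$ — are routine once the predual/second-adjoint picture is set up carefully.
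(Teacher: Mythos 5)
Your proof is correct, and it follows the standard route that the paper itself defers to (the lemma is only cited to \cite{ele-pap}, not proved here): restrict and use the normal extension of the resulting $*$-homomorphism to $\cl A^{**}$ in one direction, and in the other reduce to a unital homomorphism, extend by the second adjoint composed with the canonical projection onto $\cl B(qH)$, and use Arens regularity of $C^*$-algebras to get multiplicativity before applying (WSP). You correctly isolate the two genuine points of care — multiplicativity of the w*-continuous extension and the non-unital reduction via an idempotent similar to a projection — so no gaps remain.
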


Recently, the author and Eleftherakis in their joint work \cite{ele-pap} proved the following theorem.

\begin{thm}
A von Neumann algebra $\cl M$ satisfies (WSP) if and only if the von Neumann algebras $\cl M^{\prime}\bar \otimes \cl B(\ell^2(I))$ are hyperreflexive for all cardinals $I.$ Here, $\cl M^{\prime}$ is the commutant of $\cl M.$  
\end{thm}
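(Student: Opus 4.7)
The plan is to prove both implications using the classical link between the similarity property and hyperreflexivity via bounded derivations and a Paulsen--Christensen $2\times 2$ matrix trick. Fix a faithful normal unital $*$-representation $\cl M\subseteq\cl B(H)$, and for a cardinal $I$ set $\cl R_I := \cl M^{\prime}\bar\otimes\cl B(\ell^2(I))$ acting on $H\otimes\ell^2(I)$. By the commutation theorem, $\cl R_I^{\prime} = \cl M\,\bar\otimes\,\bb C\, I_{\ell^2(I)}$, so $\Lat{\cl R_I}$ consists of the projections $P\otimes I_{\ell^2(I)}$ with $P\in\cl M$. Hyperreflexivity of $\cl R_I$ amounts to the bound $d(T,\cl R_I)\le C\,\beta(T,\cl R_I)$ for every $T\in\cl B(H\otimes\ell^2(I))$, where $\beta(T,\cl R_I):=\sup\{\nor{Q^{\perp} T Q}:Q\in\Lat{\cl R_I}\}$.

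For the direction (WSP)$\Rightarrow$hyperreflexivity, I would take an arbitrary $T\in\cl B(H\otimes\ell^2(I))$ and consider the Paulsen-type map
$$\rho_T(A)=\begin{pmatrix}A\otimes I & [T,\,A\otimes I]\\ 0 & A\otimes I\end{pmatrix},\qquad A\in\cl M,$$
into $\cl B\bigl((H\otimes\ell^2(I))\oplus(H\otimes\ell^2(I))\bigr)$. A direct computation confirms that $\rho_T$ is a w*-continuous, unital, bounded homomorphism. Applying (WSP) yields an invertible $V$ with $V^{-1}\rho_T(\cdot)V$ a $*$-homomorphism; the upper-triangular form of $\rho_T$ then forces $V$ (after a block-diagonal normalisation) to have the shape $\begin{pmatrix}I & T_0\\0 & I\end{pmatrix}$ for some $T_0$, and matching blocks gives $[T,A\otimes I]=[A\otimes I, T_0]$ for all $A\in\cl M$, i.e.\ $T+T_0\in\cl R_I$ and $d(T,\cl R_I)\le\nor{T_0}$. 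The quantitative core is to bound $\nor{T_0}$ by a constant multiple of $\beta(T,\cl R_I)$: by rescaling one may assume $\beta(T,\cl R_I)=1$, and one then argues that the effective norm controlling $\rho_T$ is $\beta(T,\cl R_I)$ rather than $\nor{T}$, via a localised (WSP)-type argument on each $\cl M$-invariant subspace.

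For the converse, assuming each $\cl R_I$ is hyperreflexive, I would take a w*-continuous unital bounded homomorphism $u\colon\cl M\to\cl B(H_u)$, dilate to a faithful normal $*$-representation $\pi$ of sufficient multiplicity so that $H_\pi\oplus H_u$ sits inside $H\otimes\ell^2(I)$ for a suitable cardinal $I$, and form $\sigma=\pi\oplus u$. The non-orthogonal idempotent $E$ projecting onto $H_u$ along its $\sigma(\cl M)$-invariant complement lies in $\cl R_I$, and hyperreflexivity supplies a genuine projection $P\in\cl R_I$ with $\nor{P-E}$ small; once $\nor{P-E}<1$ the formula $S:=PE+(I-P)(I-E)$ is invertible and implements the required similarity of $u$ with a $*$-representation. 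The main obstacle is the quantitative bookkeeping in the forward direction --- proving that $\nor{V}\nor{V^{-1}}$ can be chosen to depend on $\beta(T,\cl R_I)$ rather than on $\nor{T}$ --- together with verifying that the construction is uniform in $I$, so that a single (WSP) constant suffices for homomorphisms acting on Hilbert spaces of arbitrary dimension.
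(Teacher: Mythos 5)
Both directions of your sketch have genuine gaps. In the forward direction, the Paulsen-type homomorphism $\rho_T$ has norm controlled by $\nor{\delta_T}\le 2\nor{T}$, where $\delta_T(A\otimes I)=[T,A\otimes I]$, so applying (WSP) to it only yields a bound of the form $d(T,\cl R_I)\le C\nor{\delta_T}$; the passage from $\nor{T}$ (or $\nor{\delta_T}$) to $\beta(T,\cl R_I)=r_{\cl R_I}(T)$ is precisely what you leave unproved, and "a localised (WSP)-type argument on each $\cl M$-invariant subspace" is not an argument. What actually closes this gap is a hypothesis-free estimate: for a projection $P\in\cl M$ one has $\delta_T(P\otimes I)=(P^{\perp}\otimes I)T(P\otimes I)-(P\otimes I)T(P^{\perp}\otimes I)$, and writing a self-adjoint contraction of $\cl M$ as an integral of its spectral projections gives $\nor{\delta_T(A\otimes I)}\le 4\,r_{\cl R_I}(T)\nor{A}$; only then does rescaling to $r_{\cl R_I}(T)=1$ make the norm of $\rho_T$ universally bounded. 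Two further steps are asserted but not justified: (i) the similarity $V$ cannot simply be "normalised" to the unipotent form $\begin{pmatrix} I & T_0\\ 0 & I\end{pmatrix}$ -- $V$ is only determined up to operators intertwining the $*$-homomorphism, and the standard route is instead: similarity $\Rightarrow$ $\rho_T$, hence its corner $\delta_T$, is completely bounded $\Rightarrow$ (Christensen's theorem on completely bounded derivations) $\delta_T$ is implemented by some $T_0$ with $\nor{T_0}\le\tfrac12\nor{\delta_T}_{cb}$, whence $d(T,\cl R_I)\le\tfrac12\nor{\delta_T}_{cb}$; (ii) to obtain a single hyperreflexivity constant you must show that (WSP) gives a similarity bound uniform over the whole family $\{\rho_T\}$ (e.g.\ by a direct-sum argument), which you flag as "the main obstacle" but do not carry out.

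The converse direction, as written, is not a proof. The map $\sigma=\pi\oplus u$ is not a $*$-representation, and the idempotent $E$ onto $H_u$ along $H_\pi$ commutes with $\sigma(\cl M)$, not with $\cl M\otimes\bb C I_{\ell^2(I)}$, so there is no reason for $E$ to be anywhere near $\cl R_I$. Hyperreflexivity of $\cl R_I$ only gives $d(E,\cl R_I)\le k\,r_{\cl R_I}(E)$, and you never show $r_{\cl R_I}(E)$ is small; moreover, even if $d(E,\cl R_I)$ were small, the nearby element need not be a projection, and $\nor{P-E}<1$ would at best conjugate $E$ to $P$, not produce a similarity of $u$ with a $*$-representation. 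The proof of this implication in \cite{ele-pap} (the source the present paper quotes, following Pisier's arguments) runs differently: the hyperreflexivity of the amplified commutants $\cl M^{\prime}\bar\otimes\cl B(\ell^2(I))$ for all cardinals $I$ is used to control the completely bounded norms of the relevant derivations/homomorphisms at every matrix level -- this is exactly why all amplifications are needed -- so that a w*-continuous unital bounded homomorphism of $\cl M$ is shown to be completely bounded, and one then concludes by Haagerup's theorem that completely bounded homomorphisms are similar to $*$-homomorphisms.
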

In the special case of a separably acting von Neumann algebra $\cl M,$ we proved the following corollary.

\begin{cor}
    Let $\cl M\subseteq \cl B(H)$ be a von Neumann algebra and $H$ be a separable Hilbert space. Then $\cl M$ satisfies (WSP) if and only if the algebra $\cl M^\prime \bar \otimes \cl B(\ell^2(\bb{N}))$ is hyperreflexive.
\end{cor}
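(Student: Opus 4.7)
\emph{Plan.} The forward direction is immediate: by the theorem above, (WSP) for $\cl M$ yields hyperreflexivity of $\cl M'\bar\otimes\cl B(\ell^2(I))$ for every cardinal $I$, in particular for $I=\bb N$.

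For the converse, assume $\cl M'\bar\otimes\cl B(\ell^2(\bb N))$ is hyperreflexive. I aim to establish (WSP) in two steps.

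\emph{Step 1 (the separable target case).} I first show that every w*-continuous, unital, bounded homomorphism $u\colon\cl M\to\cl B(K)$ with $K$ separable is similar to a $*$-representation, with a similarity constant (equivalently, $\|u\|_{\mathrm{cb}}$) bounded by a function $F$ of $\|u\|$ depending only on the hyperreflexivity constant of $\cl M'\bar\otimes\cl B(\ell^2(\bb N))$. My plan is to revisit the argument for the implication ``hyperreflexivity $\Rightarrow$ (WSP)'' in the proof of the theorem (cf.~\cite{ele-pap}): the similarity there is built from an amplification $H\otimes L$ of the identity representation, and for separable $K$ (together with separably acting $\cl M$) the auxiliary Hilbert space $L$ can be chosen of countable dimension, so that the only hyperreflexivity input required is precisely that of $\cl M'\bar\otimes\cl B(\ell^2(\bb N))$.

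\emph{Step 2 (reduction of the general case to the separable case).} Let $u\colon\cl M\to\cl B(K)$ be arbitrary. By Haagerup's theorem on completely bounded homomorphisms recalled in the Introduction, it suffices to show $\|u\|_{\mathrm{cb}}<\infty$. Since $\cl M_*$ is separable (as $H$ is separable), the unit ball of $\cl M$ admits a countable w*-dense sequence $\{A_n\}_n$. By w*-continuity of $u$ and Mazur's theorem, for every $\xi\in K$ and every $A$ in the unit ball of $\cl M$, the vector $u(A)\xi$ lies in the norm-closed convex hull of $\{u(A_n)\xi : n\in\bb N\}$; rescaling, $\overline{u(\cl M)\xi}$ equals the norm-closed linear span of $\{u(A_n)\xi : n\in\bb N\}$ and is in particular separable. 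Therefore, for any finite tuple $(\xi_1,\dots,\xi_m)\in K^m$, the $u(\cl M)$-invariant subspace $K_0:=\overline{\sum_j u(\cl M)\xi_j}$ is separable. Applying Step 1 to the restriction $u_0:=u(\cdot)|_{K_0}\colon\cl M\to\cl B(K_0)$, which is again w*-continuous, unital and bounded with $\|u_0\|\le\|u\|$, yields $\|u_0\|_{\mathrm{cb}}\le F(\|u\|)$, and therefore
\[
\bigl\|[u(A_{ij})](\xi_j)_j\bigr\|_{K^m}=\bigl\|[u_0(A_{ij})](\xi_j)_j\bigr\|_{K_0^m}\le F(\|u\|)\,\|[A_{ij}]\|\,\|(\xi_j)_j\|
\]
for every $[A_{ij}]\in M_m(\cl M)$. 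Taking suprema over $(\xi_j)_j$ and $[A_{ij}]$ gives $\|u\otimes\mathrm{id}_m\|\le F(\|u\|)$ for every $m$, hence $\|u\|_{\mathrm{cb}}\le F(\|u\|)<\infty$, as required.

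The main obstacle is Step~1: extracting from the argument in \cite{ele-pap} that, for separably acting $\cl M$ and separable target space $K$, the amplification used to manufacture the similarity has countable dimension, so that the hyperreflexivity of $\cl M'\bar\otimes\cl B(\ell^2(\bb N))$ alone is invoked. This should follow from a dimension count in the explicit construction, but it is the one point that needs careful bookkeeping, since the general theorem is stated in terms of all cardinals $I$ simultaneously.
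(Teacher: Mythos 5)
Your forward direction is fine, and your Step 2 is a correct and complete reduction: since $H$ is separable, $\cl M_*$ is separable, the invariant subspace generated by finitely many vectors is separable, and the cb-norm is computed from finite matrices acting on finite tuples of vectors, so a uniform bound $F(\|u\|)$ for separable targets does give $\|u\|_{cb}<\infty$ in general, and Haagerup's theorem finishes. The genuine gap is Step 1, which you yourself flag: the assertion that, for separably acting $\cl M$ and separable $K$, the similarity construction behind the implication ``hyperreflexivity $\Rightarrow$ (WSP)'' only invokes the hyperreflexivity of $\cl M^\prime\bar\otimes\cl B(\ell^2(\bb N))$, with a constant depending only on $k(\cl M^\prime\bar\otimes\cl B(\ell^2(\bb N)))$ and $\|u\|$. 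That claim is precisely the content that separates the corollary from the theorem (which is stated in terms of all cardinals $I$), and it is not established by a ``dimension count'' you have not carried out; as it stands the proof is conditional on an unverified reading of the argument in \cite{ele-pap}. Note also that the present paper does not reprove this corollary but quotes it from \cite{ele-pap}, so the comparison is with that source rather than with an in-paper argument.

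A cleaner way to close the gap, which also makes your Step 2 unnecessary, is to work at the level of hyperreflexivity constants and then apply the stated theorem verbatim: show that if $\cl N\bar\otimes\cl B(\ell^2(\bb N))$ is hyperreflexive with constant $k$, then $\cl N\bar\otimes\cl B(\ell^2(I))$ is hyperreflexive with constant at most $k$ for every infinite cardinal $I$. Given $T\in\cl B(H\otimes\ell^2(I))$, for each countably infinite $J\subseteq I$ the compression $T_J$ acts on $H\otimes\ell^2(J)$; compressions of $\cl N\bar\otimes\cl B(\ell^2(I))$ to such a block lie in $\cl N\bar\otimes\cl B(\ell^2(J))$, so any pair of vectors supported in $J$ that annihilates $\cl N\bar\otimes\cl B(\ell^2(J))$ annihilates $\cl N\bar\otimes\cl B(\ell^2(I))$, whence $r_{\cl N\bar\otimes\cl B(\ell^2(J))}(T_J)\le r_{\cl N\bar\otimes\cl B(\ell^2(I))}(T)$. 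Choose $A_J\in\cl N\bar\otimes\cl B(\ell^2(J))$ with $\|T_J-A_J\|\le k\,r_{\cl N\bar\otimes\cl B(\ell^2(I))}(T)+\eps$, extend by zero, and take a $\mathrm{w}^*$-cluster point over the net of countable $J$ directed by inclusion; since every vector of $H\otimes\ell^2(I)$ is supported on countably many coordinates, the cluster point $A$ satisfies $\|T-A\|\le k\,r_{\cl N\bar\otimes\cl B(\ell^2(I))}(T)+\eps$. Applying this with $\cl N=\cl M^\prime$ and invoking the theorem for all cardinals $I$ yields (WSP) directly, with no bookkeeping inside the similarity construction and no restriction to separable target spaces.
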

In general, a $\mathrm{w}^*$-closed space $\cl X$ is called completely hyperreflexive if the space  $\cl X\bar\otimes \cl B(\ell^2(\bb N))$ is hyperreflexive. It is obvious that completely hyperreflexive spaces are hyperreflexive. On the other hand, whether hyperreflexivity implies complete hyperreflexivity remains an open problem \cite{do, dl}.

In the same article \cite{ele-pap}, the author and Eleftherakis introduced the following hypothesis:\\

{\bf (CHH):} Every hyperreflexive separably acting von Neumann algebra is completely hyperreflexive.\\ 

As we showed in Section 3 of our article, the resolution of this problem yields an affirmative answer to the similarity problem. 

Throughout this paper, if $\cl M$ and $\cl N$ are von Neumann algebras, then we denote by 
$\cl M\bar \otimes \cl N$ their spatial tensor product. If $I$ is a cardinal, then $M_{I}(\cl M)$ is the dual operator space of $I\times I$ matrices with entries in $\cl M$ which define bounded operators. If $I=\bb N$ we simply write $M_{\infty}(\cl M).$ See \cite{bm,pau,pisbook} for further details.

We introduce the following hypothesis:\\

{\bf (EP): Every separably acting von Neumann algebra with a cyclic vector is hyperreflexive.}\\
We underline that many separably acting von Neumann algebras with a cyclic vector are hyperreflexive. For example, every separably acting von Neumann algebra $\cl M$ which is in standard position has a cyclic vector, it admits also a separating vector and by \cite[Corollary 3.4]{kl} it follows that $\cl M$ is hyperreflexive.

The hypothesis we introduce is weaker than Arveson's. Arveson conjectured that all von Neumann algebras are hyperreflexive, while we restrict on the ones that are separably acting and have a cyclic vector.

In Section 2, we give some basic definitions and we set the stage by presenting useful theorems and lemmas that were proven in \cite{ele-pap} and are useful for the next section.

In Section 3, we prove that under {\bf (EP)}, every separably acting von Neumann algebra satisfies (WSP). Using this result, we show that under {\bf (EP)}, all $C^*$-algebras satisfy (SP).

In what follows, $\,\cl B(H)$ is the algebra of all linear and bounded operators from the Hilbert space $H$ to itself. Furthermore,  
if $\cl D$ is a set of operators acting on $H$ then $$\cl D^\prime=\left\{T\in\cl B(H)\mid T D=D T ,\,\forall\,D\in\cl D\right\}$$ is the commutant of $\cl D.$

\section{Preliminaries}
In this section we recall some basic definitions and facts that will be used throughout the article.

Let $\cl M$ be a von Neumann algebra acting on the Hilbert space $H.$ If $T\in \cl B(H)$ we set
$$d(T,\,\cl M) =\inf\left\{\|T-X\|\,\,\mid X\in\cl M\right\}$$ 
to be the distance from $T$ to $\cl M$. 
We also set
$$r_{\cl M}(T) =\sup_{\|\xi \|=\|\eta \|=1} \left\{\left|\sca{T\xi ,\eta }\right|\,\,\,\,\mid  \sca{X\xi, \eta }=0,\,\forall\,X\in \cl M\right\}$$ 
and it is easy to prove that $r_{\cl M}(T)\leq d(T,\,\cl M).$ Furthermore, it is immediate that the quantities $d(\cdot,\,\cl M)$ and $r_{\cl M}(\cdot)$ define seminorms on $\cl B(H)$ and the quotient space $\cl B(H)/\cl M$ is a Banach space with respect to the norm $$||T+\cl M||_{1}=d(T,\,\cl M),\,\,T\in\cl B(H).$$
\begin{lem}
\label{denominator}
If $\,T\notin \cl M,$ then $r_{\cl M}(T)\neq 0.$
\end{lem}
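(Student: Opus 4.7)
The plan is to prove the contrapositive: assume $r_{\cl M}(T)=0$ and deduce $T\in\cl M$ by invoking the double commutant theorem. The key bridge is the observation that $r_{\cl M}(T)=0$ is the orthogonal-testing version of the statement ``$T\xi\in\overline{\cl M\xi}$ for every $\xi$'', i.e.\ that $T$ lies in the reflexive cover of $\cl M$.

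First I would unpack the defining condition. The requirement $\sca{X\xi,\eta}=0$ for all $X\in\cl M$ says exactly that $\eta\perp \cl M\xi$. So the assumption $r_{\cl M}(T)=0$ reads: whenever $\|\xi\|=\|\eta\|=1$ and $\eta\perp\cl M\xi$, one has $\sca{T\xi,\eta}=0$. Equivalently, for every nonzero $\xi\in H$, $T\xi$ is orthogonal to every vector orthogonal to $\overline{\cl M\xi}$, and hence
\[
T\xi\in(\cl M\xi)^{\perp\perp}=\overline{\cl M\xi}.
\]
(If this failed for some $\xi$, a standard Hahn--Banach/projection argument would produce a unit $\eta\perp\overline{\cl M\xi}$ with $\sca{T\xi,\eta}\neq 0$; after normalizing $\xi$, the pair $(\xi/\|\xi\|,\eta)$ would contradict $r_{\cl M}(T)=0$.)

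Next I would upgrade this pointwise inclusion to invariance of all $\cl M$-invariant subspaces. Let $P\in\cl M'$ be a projection. Since $P$ commutes with $\cl M$, the subspace $PH$ is $\cl M$-invariant, so $\overline{\cl M\xi}\subseteq PH$ for every $\xi\in PH$, and the previous step gives $T\xi\in PH$. Thus $T$ leaves $PH$ invariant. The same reasoning applied to $I-P\in\cl M'$ shows $T$ also leaves $P^\perp H$ invariant, which together yield $TP=PT$.

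Finally I would close with the double commutant theorem. Since $T$ commutes with every projection of $\cl M'$, the spectral theorem (applied to self-adjoints in $\cl M'$, then decomposing a general element into real and imaginary parts) implies $T$ commutes with all of $\cl M'$. Hence $T\in(\cl M')'=\cl M''=\cl M$, which is the contrapositive of what was to be proved. The only step with any substance is the first (the Hahn--Banach reduction); after that, the argument is a routine application of the fact that von Neumann algebras are reflexive in Arveson's sense ($\Alg\,\Lat(\cl M)=\cl M$), so I do not anticipate any real obstacle.
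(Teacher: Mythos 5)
Your argument is correct: the contrapositive reduction from $r_{\cl M}(T)=0$ to $T\xi\in\overline{\cl M\xi}$ for all $\xi$, then invariance of $PH$ for every projection $P\in\cl M'$ (applied to both $P$ and $I-P$ to get $TP=PT$), and finally the spectral theorem plus the double commutant theorem to conclude $T\in\cl M''=\cl M$, is exactly the standard reflexivity argument for von Neumann algebras. The paper states Lemma \ref{denominator} without proof, and your write-up supplies precisely the proof it implicitly relies on, with no gaps.
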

The above lemma states that the quantity $$||T+\cl M||_{2}=r_{\cl M}(T),\,\,T\in\cl B(H)$$ defines a norm on $\cl B(H)/\cl M$ but the quotient space $\cl B(H)/\cl M$ is not necessarilly a Banach space with respect to this norm.

\begin{defn}
\label{hyper}
If there exists $0<k<\infty$ such that 
$$d(T,\,\cl M) \leq k\,r_{\cl M}(T),\,\forall\,T\in \cl B(H),$$
then we say that the von Neumann algebra $\cl M$ is hyperreflexive.
\end{defn}
We observe that if $\cl M$ is hyperreflexive, then the quotient space $\cl B(H)/\cl M$ is a Banach space with respect to the norm $||\cdot||_{2}$ since in that case, the norms $||\cdot||_{1}$ and $||\cdot||_{2}$ are equivalent. 

Lemma \ref{denominator} and Definition \ref{hyper} lead to the following definition.

\begin{defn}
Let $\cl M$ be a von Neumann algebra acting on the Hilbert space $H.$ We define 
$$k(\cl M)=\sup_{T\not \in\,  \cl M}\frac{d(T,\, \cl M) }{r_{\cl M}(T) }$$ to be the hyperreflexivity constant of $\,\cl M.$
\end{defn}
Therefore, $\cl M$ is hyperreflexive if and only if $k(\cl M)<\infty.$ Moreover, if $\cl M$ is hyperreflexive, then $k(\cl M)\geq 1$ (since $0<r_{\cl M}(T)\leq d(T,\,\cl M)).$
 
In \cite{ele-pap}, the author and Eleftherakis, using arguments from Pisier's book \cite{pisier1}, proved some results connecting the weak similarity property with the notion of hyperreflexivity of von Neumann algebras. The most important of them are the following:

\begin{thm} \cite{ele-pap}
A von Neumann algebra $\cl M$ satisfies (WSP) if and only if the von Neumann algebras $\cl M^{\prime}\bar \otimes \cl B(\ell^2(I))$ are hyperreflexive for all cardinals $I.$ 
\end{thm}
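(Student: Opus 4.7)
The plan is to use the \emph{upper-triangular derivation trick} that converts similarity statements for $\cl M$ into distance--reflexivity estimates for $\cl M'\bar\otimes\cl B(\ell^2(I))$. Fix a faithful normal representation $\cl M\subseteq\cl B(H)$ and, for each cardinal $I$, set $\pi_I(A)=A\otimes 1\in\cl B(H\otimes\ell^2(I))$, so that $\pi_I(\cl M)'=\cl M'\bar\otimes\cl B(\ell^2(I))=:\cl N_I$. To each $T\in\cl B(H\otimes\ell^2(I))$ associate the map
\[
\rho_T\colon\cl M\to\cl B\bigl((H\otimes\ell^2(I))\oplus(H\otimes\ell^2(I))\bigr),\qquad \rho_T(A)=\begin{pmatrix}\pi_I(A)&\pi_I(A)T-T\pi_I(A)\\0&\pi_I(A)\end{pmatrix}.
\]
A short direct computation shows $\rho_T$ is a w*-continuous unital bounded homomorphism, and for any $T_0\in\cl N_I$ the invertible $S=\begin{pmatrix}1&T-T_0\\0&1\end{pmatrix}$ satisfies $S^{-1}\rho_T(A)S=\pi_I(A)\oplus\pi_I(A)$. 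Hence $\rho_T$ is similar to a $*$-homomorphism precisely when $T\in\cl N_I$ modulo a bounded correction, quantitatively with similarity constant comparable to $\inf_{T_0\in\cl N_I}\|T-T_0\|=d(T,\cl N_I)$.

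For the direction ``(WSP) $\Rightarrow$ hyperreflexivity of $\cl N_I$'', fix $T\notin\cl N_I$ and apply the assumed (WSP) of $\cl M$ to the homomorphism $\rho_T$. The resulting similarity can be renormalized, using the block-upper-triangular form and the fact that the diagonal of $\rho_T$ is already $*$-preserving, into the shape $\begin{pmatrix}1&V\\0&1\end{pmatrix}$, yielding $T_0:=T-V\in\cl N_I$ with $\|V\|\leq K\cdot\|\rho_T\|$, where $K$ is the (WSP) constant. The crucial refinement is to estimate $\|\rho_T\|$ not by $\|T\|$ but by the reflexivity norm $r_{\cl N_I}(T)$, up to a universal constant; this is accomplished by an ultraweak duality expressing the homomorphism norm of $\rho_T$ as a supremum over rank--one forms annihilating $\cl N_I$. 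Combining the two estimates yields $d(T,\cl N_I)\leq K'\,r_{\cl N_I}(T)$, i.e.\ hyperreflexivity of $\cl N_I$.

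For the converse, let $u\colon\cl M\to\cl B(K)$ be a w*-continuous unital bounded homomorphism. Amplifying $u$ by $\id_{\ell^2(I)}$ for a cardinal $I$ large enough to absorb a cyclic set, and invoking Haagerup's cyclic--vector theorem to reduce to the cyclic case, one embeds $K$ inside $H\otimes\ell^2(I)$ and realizes $u$, up to unitary equivalence, as the $(1,1)$-corner of some $\rho_T$; equivalently, the discrepancy $u(A)-\pi_I(A)$ is governed by the bounded derivation $\delta_T(A)=[\pi_I(A),T]$. The hyperreflexivity hypothesis for $\cl N_I$ produces $T_0\in\cl N_I$ with $\|T-T_0\|\leq k(\cl N_I)\,r_{\cl N_I}(T)$, so that $\begin{pmatrix}1&T-T_0\\0&1\end{pmatrix}$ implements the similarity of $\rho_T$ with $\pi_I\oplus\pi_I$; thence $u$ is similar to the $*$-representation carried on its upper corner, establishing (WSP).

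The principal difficulty is the quantitative matching between the homomorphism norm $\|\rho_T\|$, which governs the similarity constant supplied by (WSP), and the reflexivity norm $r_{\cl N_I}(T)$, which appears in the definition of hyperreflexivity. A naive estimate of $\|\rho_T\|$ in terms of $\|T\|$ would reproduce only the trivial implication ``reflexive $\Rightarrow$ hyperreflexive''; the argument genuinely needs a reflexivity-calibrated bound, built on Christensen's derivation--reflexivity duality. This quantitative calibration also explains why the hypothesis must quantify over \emph{all} cardinals $I$: the cyclic-set reductions needed in the converse direction, as well as the amplification arguments that turn a single-$T$ estimate into a uniform reflexivity bound, are only absorbed once $I$ is allowed to vary freely.
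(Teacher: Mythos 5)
The decisive problem is in your forward direction, at exactly the step you flag as crucial. Note first that $\delta_T(A)=\pi_I(A)T-T\pi_I(A)$ is inner by construction (it is implemented by $T$ itself), so $\rho_T$ is \emph{always} similar to $\pi_I\oplus\pi_I$; the entire content is quantitative, and already for that you need a uniform-constant form of (WSP) (obtainable by a direct-sum argument, but you assume it silently as ``the (WSP) constant''). The real gap is the claimed bound of $\|\rho_T\|$, equivalently of $\|\delta_T|_{\cl N_I'}\|$ where $\cl N_I'=\cl M\otimes 1$, by a universal multiple of $r_{\cl N_I}(T)$. No ``ultraweak duality'' produces \emph{rank-one} annihilating forms here: for $A\in\cl N_I'$, $\|A\|\le 1$, and unit vectors $\xi,\eta$, the natural functional $S\mapsto\sca{SA\xi,\eta}-\sca{S\xi,A^*\eta}$ does annihilate $\cl N_I$ and computes $\sca{(TA-AT)\xi,\eta}$, but it has rank two; equivalently, using the projection $\tfrac12\left(\begin{smallmatrix}1&u^*\\ u&1\end{smallmatrix}\right)\in M_2(\cl N_I')$ for a unitary $u\in\cl N_I'$ one gets $\|\delta_T|_{\cl N_I'}\|\le 4\,r_{\cl N_I\otimes 1_2}(T\otimes 1_2)$. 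Passing from such rank-two (finite-rank) data back to the rank-one quantity $r_{\cl N_I}(T)$ \emph{for the same operator} $T$ is precisely the hyperreflexivity-versus-complete-hyperreflexivity issue which this paper itself records as open \cite{do,dl}; it is not repaired by the spatial isomorphism $\cl N_I\cong\cl N_I\bar\otimes\cl B(\ell^2(\bb N))$ (the implementing unitary moves $T$), nor by letting $I$ vary, since amplifying changes the operator and the algebra in tandem and the same rank-one/rank-two mismatch reappears at every level. The elementary inequality goes the other way, $r_{\cl N_I}(T)\le\|\delta_T|_{\cl N_I'}\|$ (compress with the projection onto $\overline{[\cl N_I\xi]}$), so what your argument actually yields is $d(T,\cl N_I)\le k\,\|\delta_T|_{\cl N_I'}\|$, i.e.\ the derivation (distance) property of $\cl N_I$, which is formally weaker than the hyperreflexivity the theorem asserts. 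There is no ``Christensen derivation--reflexivity duality'' in the form you invoke that closes this gap.

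For comparison: the present paper does not reprove the statement but cites \cite{ele-pap}, where the equivalence is obtained by Pisier-style similarity/completely-bounded-map arguments \cite{pisier1}; that proof does not proceed by calibrating $\|\rho_T\|$ against $r_{\cl N_I}(T)$, which is exactly why it survives the obstruction above. Your converse direction has the right flavour (hyperreflexivity gives a small implementing operator for the relevant derivation, hence an upper-triangular similarity), but the realization of a general w*-continuous unital $u$ as the corner of some $\rho_T$ with a single bounded $T$ is itself the substantive step and needs the normal-extension machinery used in \cite{ele-pap}; the appeal to Haagerup's cyclic-vector theorem there is neither the natural tool nor obviously sufficient. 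As written, the proposal does not establish the forward implication, and the converse is only a plausible outline.
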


\begin{cor} \cite{ele-pap}
\label{104}
    Let $\cl M\subseteq \cl B(H)$ be a von Neumann algebra and $H$ be a separable Hilbert space. Then $\cl M$ satisfies (WSP) if and only if the algebra $\cl M^\prime \bar \otimes \cl B(\ell^2(\bb{N}))$ is hyperreflexive.
\end{cor}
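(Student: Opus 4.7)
The forward implication is immediate: specialise the preceding theorem (the one characterising (WSP) via hyperreflexivity of all $\cl M'\bar\otimes\cl B(\ell^2(I))$) to $I=\bb N$. The content of the corollary lies in the converse, for which my plan is to promote the single hypothesis ``$\cl M'\bar\otimes\cl B(\ell^2(\bb N))$ is hyperreflexive'' to the full family ``$\cl M'\bar\otimes\cl B(\ell^2(I))$ is hyperreflexive for every cardinal $I$'', and then invoke the theorem in the reverse direction.

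The first step is to treat an arbitrary infinite cardinal $I$. Since $|\bb N\times I|=|I|$ when $I$ is infinite, there is a unitary identification $\ell^2(\bb N)\otimes\ell^2(I)\cong\ell^2(I)$, producing a spatial isomorphism of von Neumann algebras
$$\cl M'\bar\otimes\cl B(\ell^2(I)) \;\cong\; \bigl(\cl M'\bar\otimes\cl B(\ell^2(\bb N))\bigr)\bar\otimes\cl B(\ell^2(I)).$$
The core step is then to appeal to the tensorial stability of hyperreflexivity: for any hyperreflexive von Neumann algebra $\cl A$ and any Hilbert space $K$, the inflation $\cl A\bar\otimes\cl B(K)$ remains hyperreflexive, with $k(\cl A\bar\otimes\cl B(K))\leq k(\cl A)$. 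Taking $\cl A=\cl M'\bar\otimes\cl B(\ell^2(\bb N))$ and $K=\ell^2(I)$ yields the required hyperreflexivity for all infinite $I$. For finite $I$ the algebra $\cl M'\bar\otimes\cl B(\ell^2(I))=M_{|I|}(\cl M')$ appears as a corner of $\cl M'\bar\otimes\cl B(\ell^2(\bb N))$ cut down by a finite-rank diagonal projection, and the standard compression argument transfers hyperreflexivity up to a controlled constant.

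The main obstacle, and the step I would allocate the most care to, is the tensorial-stability statement itself. Although it is classical in the reflexivity literature (going back to work of Kraus and of Davidson--Loebl), it is delicate when $K$ is non-separable, because the predual of $\cl A\bar\otimes\cl B(K)$ is then strictly larger than that of $\cl A$ and one must show that the seminorm $r_{\cl A\bar\otimes\cl B(K)}(\cdot)$ still dominates the distance function up to a uniform factor. The standard route is to exploit the matrix units $\{e_{ij}\}\subseteq\cl B(K)$ to reduce any rank-one computation $\sca{T\xi,\eta}$ to a finitely supported one, at which point the hyperreflexivity constant of $\cl A$ can be applied coordinatewise. The separability hypothesis on $H$ enters the argument precisely through the absorption identity $\ell^2(\bb N)\otimes\ell^2(I)\cong\ell^2(I)$, which ensures that the single countable dimension $\bb N$ already swallows every larger multiplicity, so that no ``master cardinal'' beyond $\bb N$ is required in the statement.
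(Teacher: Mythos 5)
Your overall skeleton (the forward direction by specialising the theorem to $I=\bb N$, the converse by upgrading hyperreflexivity of $\cl M'\bar\otimes\cl B(\ell^2(\bb N))$ to hyperreflexivity of $\cl M'\bar\otimes\cl B(\ell^2(I))$ for every cardinal $I$ and applying the theorem in reverse) is the right shape, but your ``core step'' is a genuine gap. You invoke, as if classical, the principle that for every hyperreflexive von Neumann algebra $\cl A$ and every Hilbert space $K$ the algebra $\cl A\bar\otimes\cl B(K)$ is hyperreflexive with $k(\cl A\bar\otimes\cl B(K))\leq k(\cl A)$. Already for $K=\ell^2(\bb N)$ this is exactly the question of whether hyperreflexivity implies complete hyperreflexivity, which is an open problem: the present paper states this explicitly, citing \cite{do,dl}, and the hypothesis (CHH) of \cite{ele-pap} exists precisely because this implication is not known. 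The attribution to Kraus and Davidson--Loebl is misplaced; what is available is the easy downward direction (hyperreflexivity of $\cl A\bar\otimes\cl B(\ell^2(\bb N))$ passes to $\cl A$ and to finite corners) and partial results such as those of \cite{dl} for $1$-hyperreflexive spaces. Your sketched justification (use the matrix units of $\cl B(K)$ to make rank-one computations finitely supported and apply $k(\cl A)$ coordinatewise) only controls $r$-type quantities on finitely supported data; it gives no bound on $d(T,\cl A\bar\otimes\cl B(K))$ for a general operator $T$, which is exactly where the difficulty of the open problem sits. If this step worked as stated, it would settle the complete hyperreflexivity problem.

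The gap can be closed, but only by exploiting that the algebra you inflate is itself of the form $\cl X\bar\otimes\cl B(\ell^2(\bb N))$ with $\cl X=\cl M'$, i.e.\ by proving cardinal-independence of complete hyperreflexivity rather than general tensorial stability. Concretely: for $T\in\cl B(H\otimes\ell^2(I))$ and a countable subset $J\subseteq I$, compress by $1\otimes p_J$; the compressed algebra $\cl M'\bar\otimes\cl B(\ell^2(J))$ is unitarily equivalent to the hypothesised hyperreflexive algebra, and the seminorm $r$ does not increase under these compressions, so each compression of $T$ admits an approximant within $k\,r_{\cl M'\bar\otimes\cl B(\ell^2(I))}(T)+\eps$. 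Since $1\otimes p_J\to 1$ strongly along the net of countable $J$ and the norm is $\mathrm{w}^*$-lower semicontinuous, a $\mathrm{w}^*$-cluster point of these approximants lies in $\cl M'\bar\otimes\cl B(\ell^2(I))$ and witnesses $d(T,\cl M'\bar\otimes\cl B(\ell^2(I)))\leq k\,r_{\cl M'\bar\otimes\cl B(\ell^2(I))}(T)$; the finite cardinals are handled by the same compression trick. Note also that the identification $\ell^2(\bb N)\otimes\ell^2(I)\cong\ell^2(I)$ holds for every infinite $I$ irrespective of whether $H$ is separable, so your explanation of where the separability of $H$ enters is not accurate; separability is what places the statement in the setting of Corollary \ref{104} of \cite{ele-pap}, not what makes the absorption identity available.
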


Concluding this section, we remind the reader the definition of a completely bounded map between von Neumann algebras.

\begin{defn}
 Let $\cl M$ and $\cl N$ be von Neumann algebras acting on the Hilbert spaces $H$ and $K$ respectively, and $u\colon \cl M\to \cl N$ be a linear map. For each $n\in\bb N$ consider the matrix amplification $$u_n\colon M_n(\cl M)\to M_n(\cl N),\,\, u_n((X_{i,j}))=(u(X_{i,j})).$$
We say that $u$ is completely bounded if $$||u||_{cb}:=\sup_{n} ||u_n||<\infty.$$
\end{defn}

\section{(EP) implies that all C*-algebras satisfy (SP)}

In this section we prove that under {\bf (EP)}, every separably acting von Neumann algebra satisfies (WSP). As we will see, Corollary \ref{104} plays a crucial role for the proof of this result. Using this result, we show that under {\bf (EP)}, all $C^*$-algebras satisfy (SP). First, we prove some useful lemmas.

\begin{lem}
\label{sv}
Assume that {\bf (EP)} is true. Then every separably acting von Neumann with a separating vector satisfies (WSP).
\end{lem}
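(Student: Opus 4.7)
The plan is to translate the separating-vector hypothesis on $\cl M$ into a cyclic-vector hypothesis on a suitable algebra, and then invoke (EP) together with Corollary \ref{104}. Let $\cl M\subseteq\cl B(H)$ with $H$ separable, and let $\xi\in H$ be a separating vector for $\cl M$. The starting observation is the classical duality: a separating vector for $\cl M$ is a cyclic vector for the commutant $\cl M'$. Thus $\cl M'\subseteq\cl B(H)$ is a separably acting von Neumann algebra with cyclic vector $\xi$. Note that (EP) would already give hyperreflexivity of $\cl M'$, but Corollary \ref{104} requires the stronger statement that $\cl M'\bar\otimes\cl B(\ell^2(\bb N))$ is hyperreflexive, so we need to promote the cyclic vector through the tensor product.

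Next I would show that $\xi\otimes e_1\in H\otimes\ell^2(\bb N)$ is a cyclic vector for the algebra $\cl N:=\cl M'\bar\otimes\cl B(\ell^2(\bb N))$. Indeed, the closed linear span of $\cl N(\xi\otimes e_1)$ contains every vector of the form $X\xi\otimes Te_1$ with $X\in\cl M'$ and $T\in\cl B(\ell^2(\bb N))$. Since $\xi$ is cyclic for $\cl M'$, the set $\{X\xi\mid X\in\cl M'\}$ is dense in $H$; and since $\cl B(\ell^2(\bb N))$ acts transitively on $\ell^2(\bb N)\setminus\{0\}$ (in fact $\{Te_1\mid T\in\cl B(\ell^2(\bb N))\}=\ell^2(\bb N)$), this closed span contains every elementary tensor $h\otimes v$, and hence equals $H\otimes\ell^2(\bb N)$.

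Finally I would assemble the pieces. The Hilbert space $H\otimes\ell^2(\bb N)$ is separable since $H$ is, so $\cl N$ is a separably acting von Neumann algebra possessing a cyclic vector. Applying hypothesis (EP) gives that $\cl N=\cl M'\bar\otimes\cl B(\ell^2(\bb N))$ is hyperreflexive. Corollary \ref{104} then yields that $\cl M$ satisfies (WSP), as required.

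There is no essential obstacle: the only substantive step is the passage from the cyclicity of $\xi$ for $\cl M'$ to the cyclicity of $\xi\otimes e_1$ for $\cl M'\bar\otimes\cl B(\ell^2(\bb N))$, and this is a routine density argument exploiting the fact that $\cl B(\ell^2(\bb N))$ already has cyclic vectors. The role of (EP) is precisely that it applies to $\cl M'\bar\otimes\cl B(\ell^2(\bb N))$ (which has a cyclic vector) without us needing to know whether $\cl M$ itself has a cyclic vector; this is why a separating vector on $\cl M$ suffices.
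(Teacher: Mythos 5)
Your proof is correct and follows essentially the same route as the paper: pass from the separating vector for $\cl M$ to a cyclic vector for $\cl M'\bar\otimes\cl B(\ell^2(\bb N))$, apply {\bf (EP)} to conclude this algebra is hyperreflexive, and then invoke Corollary \ref{104}. The only difference is that you spell out the (standard) verification that $\xi\otimes e_1$ is cyclic, which the paper merely asserts.
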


\begin{proof}
 Let $\cl M$ be a separably acting von Neumann algebra with a separating vector. Then the separably acting von Neumann algebra $\cl M^\prime\bar \otimes \cl B(\ell^2(\bb N))$ has a cyclic vector and under our hypothesis we have $k(\cl M^\prime\bar \otimes \cl B(\ell^2(\bb N)))<\infty.$ According to Corollary \ref{104} we deduce that the von Neumann algebra $\cl M$ satisfies (WSP).
\end{proof}


\begin{lem}
\label{SOL1}
    Assume that {\bf (EP)} is true. Then every separably acting von Neumann algebra satisfies (WSP).
\end{lem}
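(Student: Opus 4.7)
The plan is to reduce to Lemma \ref{sv} via a standard amplification. Given a separably acting von Neumann algebra $\cl M \subseteq \cl B(H)$, I would form the amplification $\widetilde{\cl M} = \{A \otimes I : A \in \cl M\} \subseteq \cl B(H \otimes \ell^2(\bb N))$. Since $H$ and $\ell^2(\bb N)$ are both separable, so is $H \otimes \ell^2(\bb N)$, hence $\widetilde{\cl M}$ is again separably acting; moreover, the map $A \mapsto A \otimes I$ is a w*-continuous unital $*$-isomorphism of $\cl M$ onto $\widetilde{\cl M}$.

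The key step is to produce a separating vector for $\widetilde{\cl M}$, and this can be done regardless of whether $\cl M$ itself admits one. I would fix a dense sequence $\{\xi_n\}_{n \in \bb N}$ in $H$ and positive scalars $c_n$ with $\sum_n c_n^2 \|\xi_n\|^2 < \infty$, and set $\eta = \bigoplus_n c_n \xi_n \in H \otimes \ell^2(\bb N)$. Then $(A \otimes I)\eta = \bigoplus_n c_n A \xi_n$, so $(A \otimes I)\eta = 0$ forces $A \xi_n = 0$ for every $n$, whence $A = 0$ by density of $\{\xi_n\}$. Thus $\eta$ is separating for $\widetilde{\cl M}$.

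With the separating vector in hand, Lemma \ref{sv} (which is available since {\bf (EP)} is assumed) yields that $\widetilde{\cl M}$ satisfies (WSP). Finally I would transfer (WSP) back to $\cl M$: any w*-continuous unital bounded homomorphism $u \colon \cl M \to \cl B(K)$ induces, by precomposing with the inverse isomorphism $\widetilde{\cl M} \to \cl M$, a w*-continuous unital bounded homomorphism on $\widetilde{\cl M}$; by (WSP) for $\widetilde{\cl M}$ this is similar to a $*$-homomorphism, and hence so is $u$.

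I do not anticipate a serious obstacle. The two small points that need to be handled with care are that (WSP) is preserved under w*-continuous unital $*$-isomorphism (which is immediate from its purely algebraic formulation) and that amplifying by the identity on $\ell^2(\bb N)$ preserves separability of the acting Hilbert space. Neither is substantive, so the real content of the argument is just the observation that every separably acting von Neumann algebra has a canonical amplification with a separating vector, whereupon Lemma \ref{sv} applies.
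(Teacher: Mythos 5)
Your argument is correct, but it takes a different route from the paper. The paper's proof invokes Haagerup's standard form theorem: since $\cl M$ acts on a separable space it is $\sigma$-finite, so it is $*$-isomorphic to a von Neumann algebra $\cl N$ in standard form, which by \cite[Lemma 2.8]{haag2} admits a cyclic and separating vector; Lemma \ref{sv} is then applied to $\cl N$ and (WSP) is transferred back across the isomorphism. You replace this structural input by the elementary amplification trick: $\cl M\otimes \bb C I\subseteq \cl B(H\otimes \ell^2(\bb N))$ is separably acting and the vector $\eta=\bigoplus_n c_n\xi_n$ (with $\{\xi_n\}$ dense in $H$ and $c_n>0$ summably small) is separating for it, so Lemma \ref{sv} applies to the amplification and (WSP) passes back through the normal $*$-isomorphism $A\mapsto A\otimes I$ exactly as you describe. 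Both arguments hinge on Lemma \ref{sv} plus the (easy, and correctly noted) invariance of (WSP) under w*-continuous unital $*$-isomorphisms; yours avoids the standard-form machinery altogether at the cost of a slight detour, since applying Lemma \ref{sv} to $\cl M\otimes\bb CI$ means working with $(\cl M\otimes\bb CI)'\bar\otimes\cl B(\ell^2(\bb N))$, i.e.\ a double amplification of $\cl M'$. In fact your construction admits a further shortcut: $\eta$ separating for $\cl M\otimes\bb CI$ means $\eta$ is cyclic for its commutant $\cl M'\bar\otimes\cl B(\ell^2(\bb N))$, so {\bf (EP)} gives hyperreflexivity of that algebra directly and Corollary \ref{104} yields (WSP) for $\cl M$ without passing through Lemma \ref{sv} or any transfer argument at all.
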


\begin{proof}
  Let $\cl M$ be a von Neumann algebra acting on the separable Hilbert space $H.$ By \cite[Theorem 1.6]{haag2} the algebra $\cl M$ is isomorphic to a von Neumann algebra $\cl N$ which is standard on some Hilbert space $K.$ Since $H$ is separable, the von Neumann algebra $\cl M$ is $\sigma$-finite. Therefore, $\cl N$ is $\sigma$-finite and by Lemma 2.8 in \cite{haag2} the algebra $\cl N$ admits a cyclic and a separating vector. Lemma \ref{sv} yields that $\cl N$ satisfies (WSP), and thus $\cl M$ satisfies (WSP).
\end{proof}
Lemmas \ref{extend} and \ref{RR} were proven in \cite{ele-pap}. We present them here, as well, for the sake of completeness.
\begin{lem} \cite{ele-pap}
\label{extend}
    Let $\cl X$ be a dual Banach space, let $\cl X_0,\,\cl Y,\,\cl Z$ be Banach spaces such that 
$\cl X=\overline{\cl X_0}^{\mathrm{w}^*}$ and $\phi\colon \cl X\to \cl Y^{**}$ be a $\mathrm{w}^*$-continuous onto isometry such that $\phi(\cl X_0)=\cl Y.$ If $u\colon \cl X_0\to \cl Z^*$ is a bounded linear map, then there exists a $\mathrm{w}^*$-continuous bounded linear map $\tilde u\colon \cl X\to \cl Z^*$ such that $\tilde{u}|_{\cl X_0}=u$ and $\|u\|=\|\tilde u\|.$
\end{lem}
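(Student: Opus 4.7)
The plan is to reduce the statement to the standard fact that any bounded linear map from a Banach space into a dual space admits a $\mathrm{w}^*$-continuous, norm-preserving extension to its bidual. Since $\phi$ is an onto isometry with $\phi(\cl X_0)=\cl Y,$ the restriction $\phi|_{\cl X_0}\colon \cl X_0\to \cl Y$ is an isometric bijection. Define the bounded linear map $v:=u\circ (\phi|_{\cl X_0})^{-1}\colon \cl Y\to\cl Z^*,$ so that $\|v\|=\|u\|.$ Assuming $v$ has a $\mathrm{w}^*$-continuous extension $\tilde v\colon \cl Y^{**}\to\cl Z^*$ with $\|\tilde v\|=\|v\|,$ set $\tilde u:=\tilde v\circ\phi\colon \cl X\to \cl Z^*.$ Then $\tilde u$ is $\mathrm{w}^*$-continuous as a composition of $\mathrm{w}^*$-continuous maps; for $x\in\cl X_0$ the element $\phi(x)$ lies in $\cl Y,$ so $\tilde u(x)=\tilde v(\phi(x))=v(\phi(x))=u(x);$ finally, $\|\tilde u\|\leq \|\tilde v\|\|\phi\|=\|u\|$ and the extension property forces $\|\tilde u\|=\|u\|.$

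For the extension of $v,$ I would use the familiar construction based on the biadjoint. Consider $v^{**}\colon \cl Y^{**}\to \cl Z^{***},$ which is automatically $\mathrm{w}^*$-$\mathrm{w}^*$-continuous, and compose with the canonical contractive projection $P\colon \cl Z^{***}\to \cl Z^*$ defined by $P(F)(z)=F(i_{\cl Z}(z))$ for $F\in \cl Z^{***}$ and $z\in \cl Z,$ where $i_{\cl Z}\colon \cl Z\to \cl Z^{**}$ is the canonical embedding. The map $P$ is $\mathrm{w}^*$-$\mathrm{w}^*$-continuous: a net $F_\alpha\to F$ in the $\mathrm{w}^*$-topology of $\cl Z^{***}=(\cl Z^{**})^*$ converges in particular when evaluated against $i_{\cl Z}(z)$ for every $z\in\cl Z.$ Setting $\tilde v:=P\circ v^{**},$ the identities $v^{**}\circ i_{\cl Y}=i_{\cl Z^*}\circ v$ and $P\circ i_{\cl Z^*}=\id_{\cl Z^*}$ yield $\tilde v|_{\cl Y}=v,$ while $\|\tilde v\|\leq \|P\|\,\|v^{**}\|=\|v\|$ combined with the reverse inequality from the extension property gives $\|\tilde v\|=\|v\|.$

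No substantial obstacle is anticipated: once the dualisation is set up, every verification is a routine application of the functoriality of the adjoint and the canonical embedding into the bidual. I note that the $\mathrm{w}^*$-density of $\cl X_0$ in $\cl X$ is not used in constructing $\tilde u,$ but it guarantees that the $\mathrm{w}^*$-continuous extension is \emph{unique}, which is presumably relevant for the applications of the lemma.
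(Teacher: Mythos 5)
Your proof is correct: the paper itself only states this lemma with a citation to \cite{ele-pap}, and your argument via $v=u\circ(\phi|_{\cl X_0})^{-1}$, the biadjoint $v^{**}$, and the canonical $\mathrm{w}^*$-continuous contractive projection $\cl Z^{***}\to\cl Z^*$ is exactly the standard construction used there. All the verifications (the identity $v^{**}\circ i_{\cl Y}=i_{\cl Z^*}\circ v$, the norm equality, and the $\mathrm{w}^*$-continuity of the composition $\tilde v\circ\phi$) check out, and your closing remark that $\mathrm{w}^*$-density of $\cl X_0$ gives uniqueness rather than existence is accurate.
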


\begin{lem}\cite{ele-pap}
\label{RR} Let $\mathcal A$ be a $\,C^*$-algebra such that $\mathcal A\subseteq \mathcal A^{**}\subseteq \mathcal B(H)$ and let $P\in \mathcal A^\prime$ be a projection. Then there exists a $C^*$-algebra $\mathcal D$ 
and a $*$-isomorphism  $\alpha\colon \mathcal A P\to \mathcal D$ which extends to a $*$-isomorphism from 
$\overline{\mathcal A P}^{\mathrm{w}^*}$ to $\mathcal D^{**}.$ 
\end{lem}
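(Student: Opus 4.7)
The plan is to set $\mathcal D := \mathcal A/J$, where $J := \{A \in \mathcal A : AP = 0\}$, and to identify both $\overline{\mathcal A P}^{\mathrm{w}^*}$ and $\mathcal D^{**}$ with the same corner of $\mathcal A^{**}$. I would start by noting that $P = P^* \in \mathcal A'$ makes the map $\pi\colon \mathcal A \to \mathcal B(PH)$, $A \mapsto AP$, a $*$-homomorphism with image $\mathcal A P$ and closed two-sided kernel $J$. This immediately produces the $C^*$-algebra $\mathcal D$ together with a $*$-isomorphism $\alpha\colon \mathcal A P \to \mathcal D$ sending $AP$ to $A+J$.

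The next step is to identify the two targets we want to compare. Since $P$ commutes with $\mathcal A$ and $\mathcal A$ is $\mathrm{w}^*$-dense in $\mathcal A^{**} \subseteq \mathcal B(H)$, we get $P \in (\mathcal A^{**})'$; let $f \in \mathcal Z(\mathcal A^{**})$ be the central support of $P$. Then $\widetilde\pi\colon \mathcal A^{**} \to \mathcal B(PH)$, $X \mapsto XP$, is a $\mathrm{w}^*$-continuous surjective $*$-homomorphism onto $\overline{\mathcal A P}^{\mathrm{w}^*}$ with kernel $\mathcal A^{**}(1-f)$, yielding a $\mathrm{w}^*$-continuous $*$-isomorphism $\overline{\mathcal A P}^{\mathrm{w}^*} \cong \mathcal A^{**} f$ via $XP \leftrightarrow Xf$. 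On the other hand, the standard bidual-of-a-quotient identification gives $\mathcal D^{**} \cong \mathcal A^{**}/J^{**}$, where $J^{**}$ is realized inside $\mathcal A^{**}$ as the $\mathrm{w}^*$-closure of $J$; as a $\mathrm{w}^*$-closed two-sided ideal, $J^{**} = \mathcal A^{**}(1-e)$ for some $e \in \mathcal Z(\mathcal A^{**})$, and the containment $J \subseteq \mathcal A^{**}(1-f)$ immediately gives $e \geq f$.

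The hard part will be proving the reverse inequality $e = f$, i.e.\ $J^{**} = \mathcal A^{**}(1-f)$. I would fix a positive contractive approximate identity $(u_\lambda) \subseteq J$, which $\mathrm{w}^*$-converges in $\mathcal A^{**}$ to the identity $1-e$ of $J^{**}$. For each $A \in \mathcal A$ one has $Au_\lambda \in J$ and $Au_\lambda \to A(1-e)$ in $\mathrm{w}^*$, so $\mathcal A(1-e) \subseteq J^{**}$ and by $\mathrm{w}^*$-density also $\mathcal A^{**}(1-e) \subseteq J^{**}$. To push $1-e$ all the way up to $1-f$, I would use the hypothesis $\mathcal A^{**} \subseteq \mathcal B(H)$ (faithfully realizing the bidual) to argue that the closed ideal $J = \mathcal A \cap \mathcal A^{**}(1-f)$ is $\mathrm{w}^*$-dense in $\mathcal A^{**}(1-f)$, via the correspondence between closed ideals of $\mathcal A$ and certain $\mathrm{w}^*$-closed ideals of $\mathcal A^{**}$; this forces $e = f$.

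Finally I would chain everything into
\[
\overline{\mathcal A P}^{\mathrm{w}^*} \cong \mathcal A^{**} f \cong \mathcal A^{**}/\mathcal A^{**}(1-f) = \mathcal A^{**}/J^{**} \cong \mathcal D^{**},
\]
and check on the generating subspace $\mathcal A P$ that the composite carries $AP$ to $A + J = \alpha(AP)$, so it really is the required $\mathrm{w}^*$-continuous extension $\widetilde\alpha$ of $\alpha$. The main obstacle is the identification $J^{**} = \mathcal A^{**}(1-f)$: one inclusion is essentially free, but the other relies decisively on the hypothesis that $\mathcal A^{**}$ is faithfully realized in $\mathcal B(H)$, without which the canonical map $\mathcal D^{**} \to \overline{\mathcal A P}^{\mathrm{w}^*}$ can fail to be injective.
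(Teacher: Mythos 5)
Your bookkeeping is correct as far as it goes: $\mathcal AP\cong\mathcal A/J=\mathcal D$, $\overline{\mathcal AP}^{\mathrm{w}^*}=\mathcal A^{**}P\cong\mathcal A^{**}f$ with $f$ the central support of $P$ in $\mathcal A^{**}$, $\mathcal D^{**}\cong\mathcal A^{**}/\overline{J}^{\mathrm{w}^*}$ with $\overline{J}^{\mathrm{w}^*}=\mathcal A^{**}(1-e)$ and $f\le e$, and the whole lemma is equivalent to $e=f$. (Note also that the present paper does not prove the lemma; it only quotes it from \cite{ele-pap}, so there is no in-paper argument to compare you against.) The genuine gap is your treatment of $e=f$. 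The correspondence between norm-closed ideals of $\mathcal A$ and weak-$*$ closed ideals of $\mathcal A^{**}$ only says that $J\mapsto\overline{J}^{\mathrm{w}^*}$ is injective and $J=\mathcal A\cap\overline{J}^{\mathrm{w}^*}$; it does \emph{not} say that a given weak-$*$ closed ideal such as $\mathcal A^{**}(1-f)$ is the weak-$*$ closure of its intersection with $\mathcal A$. Your approximate-identity computation only re-proves the trivial inclusion $\mathcal A^{**}(1-e)\subseteq\overline{J}^{\mathrm{w}^*}$, and the hypothesis that $\mathcal A^{**}$ is faithfully and normally realized in $\mathcal B(H)$, which you invoke as the decisive ingredient, does not give the reverse inclusion.

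In fact, with only the stated hypotheses the identification $e=f$ can fail. Take $\mathcal A=C[0,1]$ in its universal representation, let $z\in\mathcal Z(\mathcal A^{**})$ be the minimal central projection supporting the one-dimensional representation $\delta_{1/2}$, and put $P=1-z$, which lies in $\mathcal A'$ because $z\in(\mathcal A^{**})'$. Then $f=c(P)=1-z$, whereas $J=\{A\in\mathcal A\colon A(1-z)=0\}\subseteq\mathcal A\cap\mathbb Cz=\{0\}$, so $e=1\neq f$: the canonical normal surjection $\mathcal D^{**}\cong\mathcal A^{**}\to\overline{\mathcal AP}^{\mathrm{w}^*}=\mathcal A^{**}(1-z)$ has kernel $\mathbb Cz$. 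Since any $*$-isomorphism $\overline{\mathcal AP}^{\mathrm{w}^*}\to\mathcal D^{**}$ extending $\alpha$ is automatically normal, composing it with $\alpha^{**}$ and using weak-$*$ density of $\mathcal AP$ shows it would have to be the inverse of that canonical (non-injective) map, so no such extension exists in this situation. Consequently your route cannot be completed from the bare assumption that $P$ is a projection in $\mathcal A'$: the weak-$*$ density of $J$ in $\mathcal A^{**}(1-f)$ (equivalently, that every bounded functional of $\mathcal AP$ is implemented weak-$*$ continuously on $PH$) is precisely the substance of the lemma and must come from additional information about $P$ supplied by the setting of \cite{ele-pap}; you should work from the proof given there rather than from the ideal-correspondence shortcut.
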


The above Lemmas are the key to pass from every separably acting von Neumann algebra to all $C^*$-algebras. Assuming that {\bf (EP)} holds and following exactly the same proof procedure for Lemma 3.5, Theorem 3.6 and Corollary 3.7 in \cite{ele-pap} we deduce that

\begin{thm}
    Under {\bf (EP)}, all $C^*$-algebras satisfy (SP).
\end{thm}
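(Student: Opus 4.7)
The plan is to import the reduction machinery developed in \cite[Lemma 3.5, Theorem 3.6, Corollary 3.7]{ele-pap} and feed it Lemma \ref{SOL1} as the new, unconditional input on the separably acting side; the rest of the argument then runs verbatim. Concretely, by Lemma \ref{WSP} it suffices to show that $\cl A^{**}$ satisfies (WSP) for every unital $C^*$-algebra $\cl A$, so I fix a $\mathrm{w}^*$-continuous, unital, bounded homomorphism $u\colon \cl A^{**}\to\cl B(H)$ (with $H$ possibly non-separable) and seek an invertible $S\in\cl B(H)$ conjugating $u$ to a $*$-homomorphism.

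The core step is a separable reduction. For every separable $u(\cl A^{**})$-invariant subspace $K\subseteq H$, with $P_K\in u(\cl A^{**})'$ the corresponding projection, Lemma \ref{RR} applied to a suitable $C^*$-subalgebra together with $P_K$ identifies the compression $u(\cdot)P_K$ with a $\mathrm{w}^*$-continuous unital bounded homomorphism taking values in (the commutant of) a separably acting von Neumann algebra on $K$; Lemma \ref{SOL1} then supplies an invertible $S_K\in\cl B(K)$ that conjugates this compression to a $*$-homomorphism. The quantitative heart of the argument --- and what the proofs in \cite{ele-pap} are engineered to establish, using Lemma \ref{extend} together with the Pisier-type estimates of \cite{pisier1} --- is the assertion that $\|S_K\|\,\|S_K^{-1}\|$ is bounded by a constant depending only on $\|u\|$, uniformly in $K$.

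With this uniform control in place, the global similarity $S\in\cl B(H)$ is assembled by taking a $\mathrm{w}^*$-limit of the $S_K$ (appropriately extended by the identity to $H$) along an ultrafilter on the directed set of finite subsets of $H$; uniform boundedness guarantees both that the limit exists and that it is invertible, and a standard verification on cyclic vectors of the separable compressions shows that $S^{-1}u(\cdot)S$ is $*$-multiplicative (first on $\cl A$, then on $\cl A^{**}$ by $\mathrm{w}^*$-continuity). I expect the main obstacle to be precisely the uniform bound on the local similarity constants $\|S_K\|\,\|S_K^{-1}\|$: all the technical subtlety of \cite{ele-pap} is concentrated at this point, and the role of Lemma \ref{SOL1} of the present paper is exactly to make the input to those estimates unconditional under \textbf{(EP)}, so that the conclusions of Theorem 3.6 and Corollary 3.7 of \cite{ele-pap} carry over without any further hypothesis.
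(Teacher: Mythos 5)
Your proposal takes essentially the same route as the paper: the paper's proof consists precisely of running Lemma 3.5, Theorem 3.6 and Corollary 3.7 of \cite{ele-pap} verbatim, with Lemma \ref{SOL1} (under \textbf{(EP)}, every separably acting von Neumann algebra satisfies (WSP)) replacing the previously conditional input, the passage to general $C^*$-algebras being handled by Lemmas \ref{WSP}, \ref{extend} and \ref{RR} exactly as you indicate. Your further sketch of the internals of that reduction (separable compressions, uniform control of the local similarity constants, and the limiting argument) is your own reconstruction of \cite{ele-pap}, but since you, like the paper, defer its justification entirely to that reference, the logical content of your argument coincides with the paper's.
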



\noindent

\end{document}